\newcommand{\Deltab}{\bigtriangleup}
\begin{document}

\newtheorem{theorem}{Theorem}  [section]
\newtheorem{proposition}[theorem]{Proposition}
\newtheorem{lemma}[theorem]{Lemma}
\newtheorem{clm}[theorem]{Claim}
\newtheorem{corollary}[theorem]{Corollary}
\newtheorem{conj}[theorem]{Conjecture}
\newtheorem{conjecture}[theorem]{Conjecture}

\title{Maps and $\Delta$-matroids revisited}

\author{R\'emi Cocou Avohou}
\address[R.C.A.]{Einstein Institute of Mathematics, The Hebrew University of Jerusalem, Giv'at Ram, Jerusalem, 91904, Israel, \&
ICMPA-UNESCO Chair, 072BP50, Cotonou, Rep. of Benin,  \& Ecole Normale Superieure, B.P 72, Natitingou, Benin}
\email{avohou.r.cocou@mail.huji.ac.il}

\author{Brigitte Servatius}
\address[B.S.]{Mathematical Sciences, Worcester Polytechnic Institute, Worcester MA 01609-
2280}
\email{bservat@wpi.edu }

\author{Herman Servatius}
\address[B.S.]{Mathematical Sciences, Worcester Polytechnic Institute, Worcester MA 01609-
2280}
\email{hservat@wpi.edu }

\maketitle

\begin{abstract} Using Tutte's combinatorial definition of a map we define a $\Delta$-matroid
purely combinatorially and show that it is identical to Bouchet's topological definition.
\end{abstract}

\section{Matroids and $\Delta$-matroids}
A {\em matroid} $M$ is a finite set $E$ and a collection $\mathcal{B}$ of subsets of $E$
satisfying the condition that if
\begin{enumerate}\renewcommand{\theenumi}{MB}
\item
If $B_1$ and $B_2$ are in $\mathcal{B}$ and $x \in B_1 \setminus B_2$ then there
exists a $y \in  B_2 \setminus B_1$ such that $(B_1 \cup \{ y \} ) \setminus \{ x \} = B_1 \Deltab \{x,y\} \in \mathcal{B}$
\label{BaseAxiom}
\end{enumerate}
Axiom (MB) is called the
{\em basis exchange axiom}. Sets in $\mathcal{B}$ are called {\em bases} of $M$.

Replacing the set difference in Axiom~(MB) by the symmetric difference we obtain the symmetric exchange axiom ($\Delta$F) used by
Bouchet~\cite{greedydelt} to define $\Delta$-matroids.

A {\em $\Delta$-matroid} $D$ is a finite set $E$ and a collection $\mathcal{F}$ of subsets of $E$
satisfying the condition that if
\begin{enumerate}\renewcommand{\theenumi}{$\Delta$F}
\item If $F_1$ and $F_2$ are in $\mathcal{F}$ and $x \in F_1 \Deltab  F_2$ then there
exists a $y \in  F_2 \Deltab F_1$ such that $F_1 \Deltab \{x,y\} \in \mathcal{F}$.
\label{FeaseAxiom}
\end{enumerate}
Axiom ($\Delta$F) is called the {\em symmetric exchange axiom} and the sets in $\mathcal{F}$ are called the {\em feasible
sets} of $D$.  It is important to note that $y$ may equal $x$, so
$|F_1 \Deltab \{x,y\}| - |F_1| \in \{0, \pm 1 , \pm 2\}$.

 There are two obvious matroids associated with every $\Delta$-matroid;
$M_u$, the {\em upper matroid}, whose bases are the feasible sets with largest cardinality, and
$M_l$, the {\em lower matroid}, whose bases are the feasible sets with least cardinality,~\cite{Bouchet}.

\section*{Delta Matroids and maps on surfaces}
In~\cite{Bouchet}, Bouchet associates a $\Delta-$matroid to any map.
A map is a cellular embedding of a graph $G$ into a compact surface, and, for the $\Delta-$matroid he defined, the lower matroid is the
cycle matroid of $G$, and the upper matroid is the dual of the cycle matroid of the geometric dual, $G^*$, of $G$ in the surface.
For more information about maps see~\cite{Tutte,godsil2001algebraic}.
In this section we would like to reformulate the connection between maps and $\Delta$-matroids in such a way as to clarify both
the geometry and the combinatorics.

Bouchet defined a {\em base} $B$ of a map  as a selection of edges from the cellularly embedded graph, $B \subseteq E$, such that,
after deleting all the edges of $B$
and all the dual edges of $E \setminus B$, together with their endpoints, the resulting non-compact surface is connected.
To perform this operation, it is convenient to use the barycentric subdivision of the map, whose one-skeleton contains both the graph and the dual-graph,
with the edges of each subdivided in two, see Figure~\ref{mappycrappy00ab}a and b.
\begin{figure}[htb]
\centering
a)\includegraphics[scale=.5]{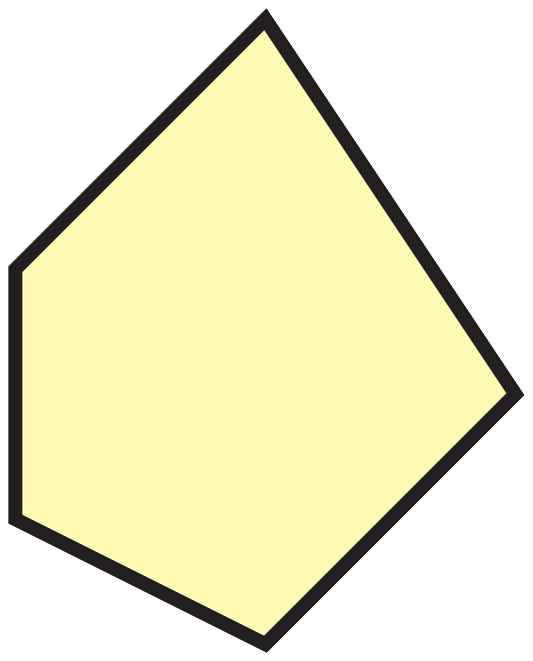}  b)\includegraphics[scale=.5]{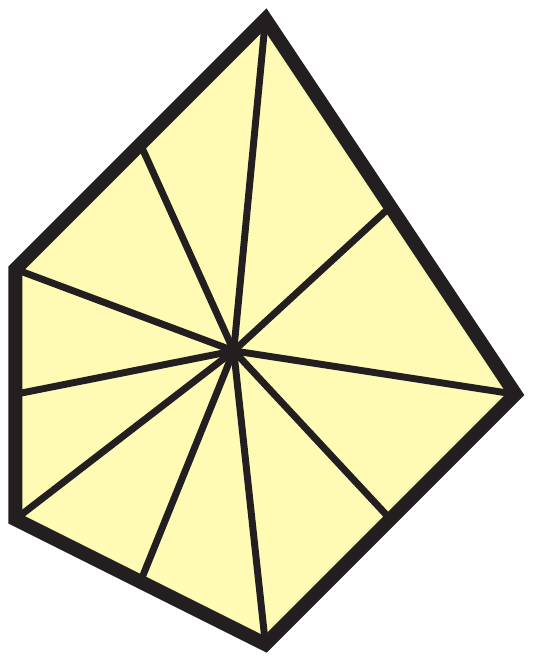}
 c)\includegraphics[scale=.5]{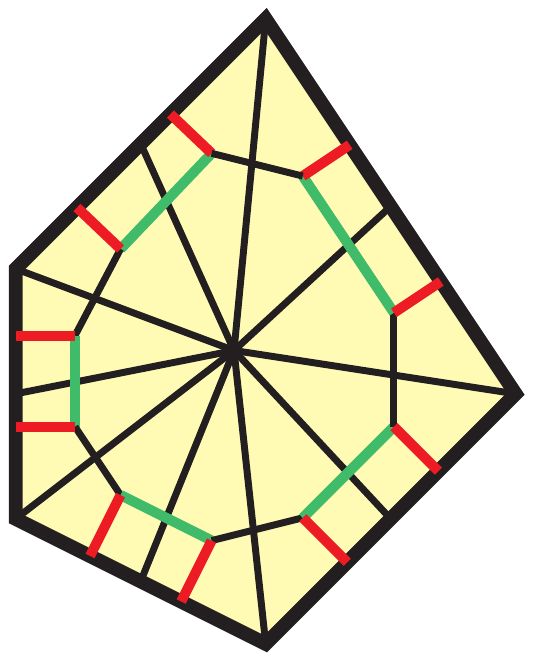}  c)\includegraphics[scale=.5]{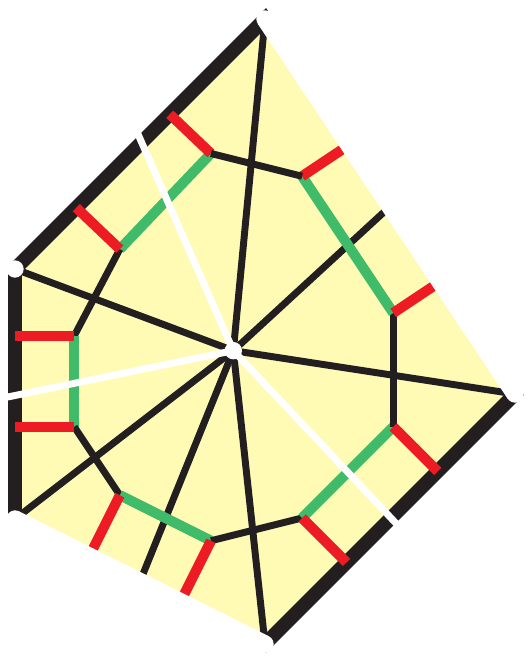}
 \caption{a) A cell of a map, b) its barycentric subdivision, c) the map graph, d) deleting an edge/dual-edge selection.\label{mappycrappy00ab}}
\end{figure}
The {\em map graph} is the geometric dual of the barycentric subdivision, Figure~\ref{mappycrappy00ab}c, where
the edges are colored green, red, and black depending on whether they are parallel to one of the original edges, cross one, or neither.
 Suppose, as Bouchet did, we delete, for each edge, either the edge
or its dual, together with their endpoints, as realized  in the barycentric subdivision. 
If it should happen that some vertex or dual vertex of the map is not deleted,
then it is an interior point of the the deleted surface, and we may
puncture the surface there without affecting the connectivity.
Then, expanding the holes
at the vertices and dual vertices, there is a deformation of the punctured surface which respects all the edges and dual edges,
\begin{figure}[htb]
\centering
a)\includegraphics[scale=.5]{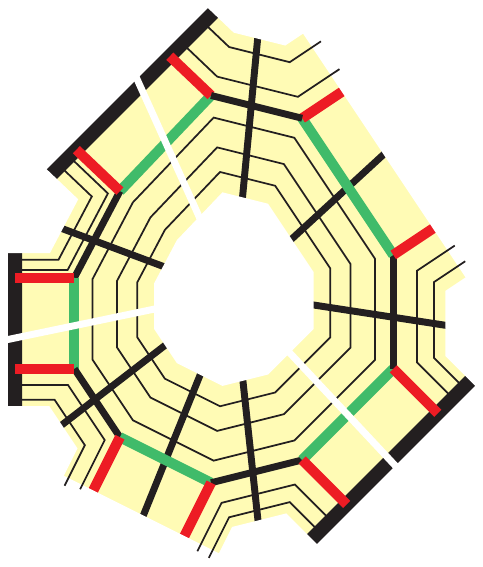} \qquad b)\includegraphics[scale=.5]{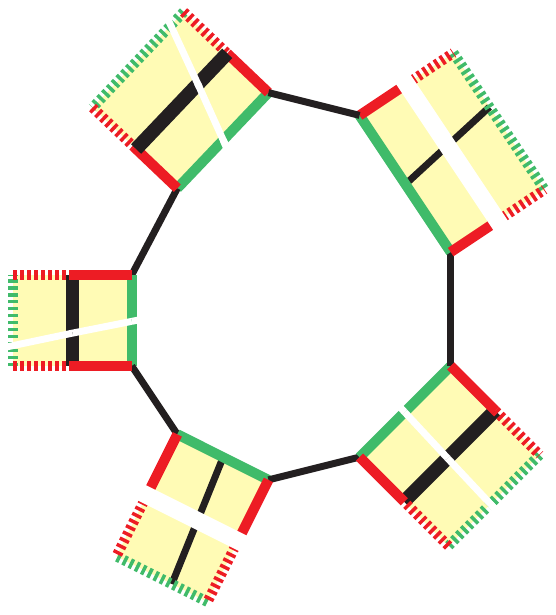} \qquad c)\includegraphics[scale=.5]{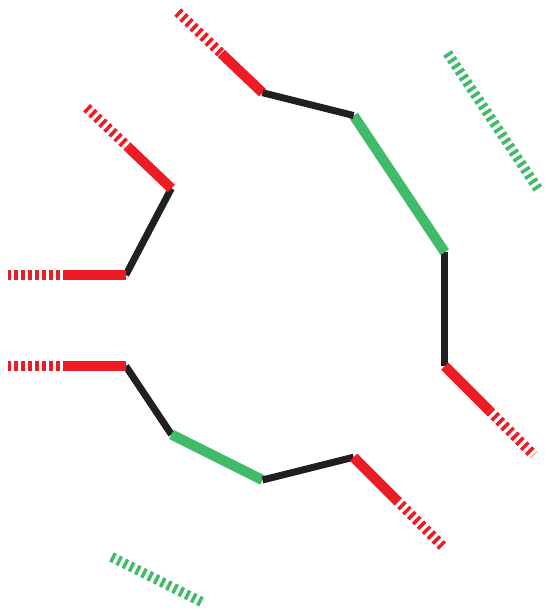}
 \caption{Deforming away from the vertex and dual vertex holes.\label{mappycrappy456}}
\end{figure}
so, in particular, respecting the deleted edges.
This deformation can continue, expanding the holes until
all that is left is the set of black edges of the map graph and the green-red quadrilaterals, each of which has been cut in half,
either leaving the green edge pair intact, or the red edge pair.
Each of these cut quadrilaterals can and be deformed, expanding the cut, onto the surviving color pair, leaving the map graph with one
 color pair deleted from each quadrilateral, green for those in $B$, and red for the others.
This is a $2$-regular subgraph of the map graph, and contains all the black edges.   By the deformation, the
surface with the edges and dual edges deleted is connected if and only if the corresponding $2$-regular subgraph of the map graph is connected as
a topological space, which is true if and only if that $2$-regular subgraph is a Hamiltonian cycle.

Bouchet went on to show that the sets $B$ formed the feasible sets of
a $\Delta$-matroid on $E$,
using Eulerian splitters.
Using the map graph, we may establish this simply and directly.

\section{Combinatorial Maps and $\Delta$-matroids}
Tutte, in the introduction to his paper {\em What is a map?}~\cite{Tutte} remarks
\begin{quote}
 Maps are usually presented as cellular dissections of topologically defined surfaces. But some combinatorialists, holding that maps
 are combinatorial in nature, have suggested purely combinatorial axioms for map theory,
 so that that branch of combinatorics can be developed without appealing to point-set topology.
\end{quote}
Tutte's idea is that each edge of a map is associated with four flags, corresponding to
the triangles in the barycentric subdivision. Each flag has three vertices: one corresponding to a vertex of the embedded graph,
one corresponding to an edge, and one corresponding to a face of the map.
The map can be uniquely described in terms
of three perfect matchings. Two flags are matched if they share a vertex of the same kind.  Faces, Euler characteristic,
and orientability  can be treated combinatorially without appealing to topology. We now recall Tutte's
axiomatic approach as presented in~\cite{godsil2001algebraic}.


Let $\Gamma$ be a connected graph whose edges are partitioned into three classes
$R$, $G$, and $B$ which we color respectively red, green, and black.  $\Gamma$ is called {\em map graph}
or a {\em combinatorial map} if the following conditions are satisfied:
\begin{enumerate}
\item Each color class is a perfect matching
\item $R \cup G$ is a union of $4$-cycles
\item $\Gamma$ is connected
\end{enumerate}
The graph $\Gamma$ is 3-regular and edge $2$-connected.
$\Gamma$ may have parallel edges, although necessarily not red/green.
$\Gamma$ contains $2$-regular subgraphs which use all the black edges
of $\Gamma$,
which we call {\em fully black} $2$-regular subgraphs;  $R\cup B$ and $G \cup B$ are
examples, and there always exists a fully black Hamiltonian cycle.
To see this, first note that a fully black $2$-regular subgraph cannot
contain any incident green and red edges, so
every red/green quadrilateral intersects a fully black $2$-regular
subgraph in either two red, or two green edges.
Now consider a fully black $2$-regular subgraph of $\Gamma$ with the
fewest connected components. If there is not a single component, then there
is a green/red quadrilateral which intersects the subgraph in, say, two red edges
which belong to two different components, and swapping red and green
on that quadrilateral reduces the number of components of the subgraph, violating minimality.

\begin{theorem}
 Given a combinatorial map $\Gamma(R,G,B)$, let $E$ be the set of quadrilaterals of $R \cup G$, and let $\mathcal{F}$  be the collection
 of subsets of $E$ corresponding to the pairs of green edges in a fully black Hamilton cycle in $\Gamma$. Then $(\mathcal{F}, E)$ is
 a $\Delta$-matroid.
\end{theorem}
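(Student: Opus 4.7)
The plan is to translate feasibility directly into a statement about fully black Hamilton cycles in $\Gamma$ and to prove ($\Delta$F) by a two-step surgery on such a cycle. Let $F_1, F_2 \in \mathcal{F}$ correspond to fully black Hamilton cycles $H_1, H_2$. Since $H_1$ and $H_2$ both contain every black edge, they can differ only on colored edges; moreover at each quadrilateral $q \in E$ each $H_i$ uses either the green pair of $q$ or the red pair of $q$. Consequently, as an edge set, $H_1 \Deltab H_2$ is exactly the disjoint union of the $4$-cycles of the quadrilaterals in $F_1 \Deltab F_2$.

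Fix $x \in F_1 \Deltab F_2$ and let $H_1'$ be obtained from $H_1$ by swapping the color pair on the quadrilateral $x$; it is a fully black $2$-regular subgraph whose associated feasible set is $F_1 \Deltab \{x\}$. Because the swap is a single $2$-opt move applied to the Hamilton cycle $H_1$, the number of components of $H_1'$ is either $1$ or $2$. If $H_1'$ is a Hamilton cycle, take $y=x$: then $F_1 \Deltab \{x,y\} = F_1 \Deltab \{x\} \in \mathcal{F}$ and we are done.

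Otherwise $H_1'$ splits into two cycles $C_1$ and $C_2$. I would use the observation that at every vertex the two edges used by $H_1'$ (one black, one colored) lie in the same component of $H_1'$, so every black edge of $\Gamma$ is internal to a single $C_i$. Since $H_2$ and $H_1'$ share all of their black edges, any edge of $H_2$ joining $V(C_1)$ to $V(C_2)$ must be colored, and at least one such crossing edge exists because $H_2$ is connected while $V(C_1), V(C_2)$ partition the vertices. Let $y$ be the quadrilateral containing such a crossing colored edge of $H_2$: the four vertices of $y$ are partitioned $2$-$2$ between $V(C_1)$ and $V(C_2)$, and the partition necessarily respects the color pair that $H_1'$ uses on $y$ (otherwise $H_1'$ itself would already have a $V(C_1)$-$V(C_2)$ edge). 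Consequently $H_2$ uses the opposite pair on $y$, so $H_1'$ and $H_2$ disagree on $y$, forcing $y \in F_1 \Deltab F_2$ and $y \neq x$ (they agree on $x$).

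It remains to verify that toggling the color pair at $y$ in $H_1'$ merges $C_1$ and $C_2$ into a single cycle: removing the two $H_1'$-edges of $y$ opens $C_1$ and $C_2$ each into a path between its two vertices of $y$, and the two edges of the opposite pair in $y$ reconnect these four endpoints crosswise, producing one Hamilton cycle whose feasible set is $F_1 \Deltab \{x,y\}$, as required by ($\Delta$F). The main obstacle I anticipate is justifying that the quadrilateral supplied by $H_2$ must merge, rather than re-split, the two components of $H_1'$; the decisive geometric point is that the $2$-$2$ split of $V(y)$ is forced to coincide with the color pair used by $H_1'$ on $y$, so toggling automatically introduces two edges crossing the $V(C_1)/V(C_2)$ partition.
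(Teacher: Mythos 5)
Your proposal is correct and takes essentially the same approach as the paper's proof: toggle the color pair at $x$, and if the Hamilton cycle splits into two components, use the connectivity of the second Hamilton cycle (which shares all black edges with the first) to locate a second quadrilateral $y$ whose colored edges cross the two components, so that toggling $y$ as well reconnects them into a single fully black Hamilton cycle. Your write-up merely supplies explicitly the justifications the paper delegates to its figures (why the crossing edge must be colored, why the $2$--$2$ split of $V(y)$ forces the merge).
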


\begin{proof}
 We have to show the symmetric exchange property.
Let $F_C$ and $F_{C'}$ be sets of quadrilaterals corresponding to fully black Hamiltonian cycles $C$ and $C'$.
Let $q \in F_C \Deltab F_C'$, so the edges of quadrilateral $q$ are differently colored in $C$ and $C'$, say red and green.
There are two cases, either replacing in $q$ the red edges in $C$ with the green of $C'$ results in two components or one.
See Figure~\ref{deltydeltfig01a}.
\begin{figure}[htb]
\centering
\includegraphics{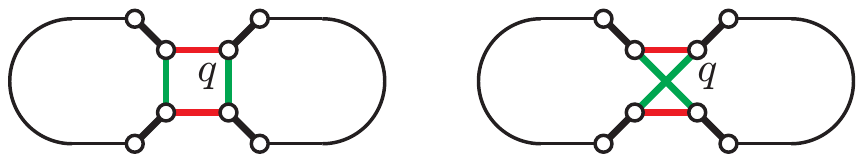}
\caption{\label{deltydeltfig01a}}
\end{figure}
If it results in just one component, then take $q' = q$, and $F_c \Deltab \{q,q'\} = F_c \Deltab \{q\}$ is the set of
red quadrilaterals of a fully black Hamiltonian cycle, and hence feasible, as required.

Otherwise, if there are two components, the Hamiltonian cycle of $C'$ contains a non-black edge, say green, of a quadrilateral
$q_1$, connecting
those two components, and necessarily both red edges of $q_1$ are in $C$ and both green edges of $q_1$ connect the components,
and $q' \in C \Deltab C'$.
See Figure~\ref{deltydeltfig02a}.
\begin{figure}[htb]
\centering
\includegraphics{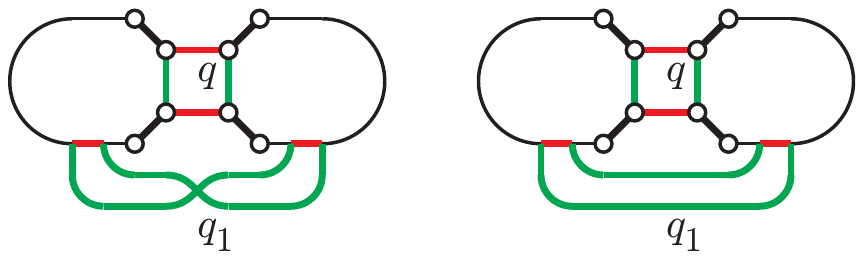}
\caption{\label{deltydeltfig02a}}
\end{figure}
Regardless of how the green edges of $q_1$ are placed, swapping the edges of both $q$ and $q'$ in $C$ yields
a new fully black Hamiltonian cycle, so the set $Q \Deltab \{q,q_1\}$ is feasible, as required.
\end{proof}


Since $R$, $G$ and $B$ are perfect matchings, the union of any two them induces a set of disjoint cycles.
Let $V$ be the set of cycles of $R \cup B$,
$E$ be the set of cycles  of $R \cup G$,
and $V^*$ be the set of cycles of $G \cup B$.
There is a graph $(V,E)$ where incidence is defined between a red-black cycle and a red-green cycle if they share and edge, and, similarly,
there is a graph $(V^*,E)$ where incidence is defined between a green-black cycle and a red-green cycle if they share and edge.
%
%
%
We say that $\Gamma$ encodes the graph $(V,E)$ and its geometric dual $(V^*,E)$.

\begin{theorem}
Let  $\Gamma(R,G,B)$ be a combinatorial map and  $D_{\Gamma} =(\mathfrak{F},E)$ its associated $\Delta$-matroid.
Then the lower matroid of $D_{\Gamma}$ is the cycle matroid of the graph $(V,E)$
and the upper matroid of $D_{\Gamma}$ is the cocycle matroid of the graph $(V^*,E)$.
\end{theorem}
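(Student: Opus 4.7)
The plan is to identify the bases of the lower matroid of $D_\Gamma$ with the spanning trees of $(V,E)$, and then to obtain the upper matroid statement from the red/green symmetry of the combinatorial map.

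For the lower matroid, the key observation is that the only edges of $H_F$ that can link two distinct red-black cycles of $\Gamma$ are the two green edges of a quadrilateral $q\in F$: black edges and red edges each have their two endpoints in the same cycle of $R\cup B$, so any edge of $H_F$ crossing between different red-black cycles must be a green edge in a quadrilateral whose green pair is selected, i.e., in $F$; and when such green edges do cross, they join the two red-black cycles that are the endpoints of $q$ viewed as an edge of $(V,E)$. Grouping flags by their red-black cycle, the components of $H_F$ therefore refine those of the subgraph $(V,F)$, so feasibility of $F$ forces $(V,F)$ connected, and hence $|F|\ge |V|-1$ with equality only if $F$ is a spanning tree of $(V,E)$.

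It remains to show that every spanning tree $F$ of $(V,E)$ is itself feasible, which I would prove by induction on $|V|$. The base case $|V|=1$ is immediate: then $F=\emptyset$ and $H_\emptyset=R\cup B$ is the single red-black cycle, already a Hamilton cycle. For the inductive step, choose a leaf $v_0$ of $F$ with unique incident edge $q_0\in F$ and neighbor $v_1$, and smooth $\Gamma$ by deleting the four flags $a_1,\dots,a_4$ of the quadrilateral $q_0$ and replacing the two black edges on the $v_0$-side of $q_0$ by one synthetic black edge, and likewise on the $v_1$-side. The main obstacle is to verify that the resulting graph $\tilde\Gamma$ is again a combinatorial map encoding the contraction $(V,E)/q_0$, which amounts to checking the three perfect matchings, the $4$-cycle structure of $\tilde R\cup\tilde G$, and connectedness (the last using that $q_0$ is not a loop, since $v_0\ne v_1$). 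Once this is done, the local arcs $b_1{-}a_1{-}a_4{-}b_4$ and $b_2{-}a_2{-}a_3{-}b_3$ of $H_F$ become single black edges of $\tilde\Gamma$, so $H_F$ is connected in $\Gamma$ if and only if $H_{F\setminus\{q_0\}}$ is connected in $\tilde\Gamma$; as $F\setminus\{q_0\}$ is a spanning tree of $(V,E)/q_0$ on $|V|-1$ vertices, the inductive hypothesis closes the argument.

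For the upper matroid, exchange the roles of red and green. The relabeled graph $\Gamma(G,R,B)$ is still a combinatorial map, whose new red-black cycles are the old green-black cycles, so it encodes the dual graph $(V^*,E)$. A fully black Hamilton cycle that uses the green pairs of exactly the quadrilaterals in $F$ in $\Gamma(R,G,B)$ is the same physical cycle that, read in $\Gamma(G,R,B)$, uses the green pairs of exactly the quadrilaterals in $E\setminus F$, so the feasible sets of $D_{\Gamma(G,R,B)}$ are the complements in $E$ of the feasible sets of $D_\Gamma$. Applying the already-proved lower matroid statement to $\Gamma(G,R,B)$ identifies its minimum feasible sets with the spanning trees of $(V^*,E)$; taking complements, the maximum feasible sets of $D_\Gamma$ are the complements in $E$ of spanning trees of $(V^*,E)$, i.e., the bases of the cocycle matroid of $(V^*,E)$.
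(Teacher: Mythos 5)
Your route coincides with the paper's on the ``necessity'' half and goes beyond it on the other half. The paper's printed proof is essentially two sentences: a fully black Hamilton cycle must select green on a set of quadrilaterals that connects all the red--black cycles and red on a set that connects all the green--black cycles, whence the extreme feasible cardinalities are $|V|-1$ and $|E|-|V^*|+1$; the converse direction --- that every spanning tree of $(V,E)$ and every co-tree of $(V^*,E)$ is actually realized by some fully black Hamilton cycle --- is asserted rather than proved. Your first paragraph is the paper's necessity argument made precise (components of $H_F$ refine the components pulled back from $(V,F)$), your induction supplies the missing realizability, and your red/green relabelling settles the upper matroid by symmetry where the paper handles both bounds in one breath. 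In that sense the proposal is more complete than the paper's own proof.

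There is, however, one concrete error to repair in the inductive step. With green edges $a_1a_4$ and $a_2a_3$ and red edges $a_1a_2\subseteq v_0$, $a_3a_4\subseteq v_1$, the smoothing you describe --- one synthetic black edge per side, i.e.\ $b_1b_2$ and $b_3b_4$ --- realizes the \emph{deletion} of $q_0$, not its contraction: the red--black cycles $v_0$ and $v_1$ remain distinct in $\tilde\Gamma$, and the arc of $H_F$ from $b_1$ through $a_1$ and $a_4$ to $b_4$ does not collapse onto any edge of $\tilde\Gamma$, so the claimed equivalence ``$H_F$ connected iff $H_{F\setminus\{q_0\}}$ connected'' fails as stated. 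To realize $(V,E)/q_0$ you must pair the black stubs \emph{across} the green edges, taking $b_1b_4$ and $b_2b_3$ as the synthetic black edges; then $v_0$ and $v_1$ do merge into a single red--black cycle of $\tilde\Gamma$ and your two displayed arcs of $H_F$ become exactly the two new black edges. With that correction the induction goes through, but you should still carry out the promised verification that $\tilde\Gamma$ satisfies the map axioms, including the degenerate cases permitted by the definition (e.g.\ a black edge parallel to a red edge of $q_0$, so that $b_1=a_2$ and the cycle $v_0$ consists only of flags of $q_0$); these are handled uniformly by collapsing each maximal arc of $H_F$ through the deleted flags to a single black edge. The complementation argument for the upper matroid is correct as written.
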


\begin{proof}
Given $\Gamma(R,G,B)$, recall that the feasible sets of $D$ consist of $RG$ quadrilaterals whose $R$ edges are contained in a
fully black Hamilton cycle of $\Gamma$. Any fully black Hamilton cycle $C$ of $\Gamma$ must contain the red edges corresponding to
a spanning tree of $(V,E)$ as well as the green edges corresponding to a spanning tree of $(V^*, E)$. So the minimal number of red edges in $C$ is
$2(|V|-1)$, while the maximal number is $2(|E| - |V^*| + 1)$. The edge sets of the spanning trees of $(V,E)$ are the bases of its cycle matroid,
while the complements of edge sets of spanning trees in $(V^*, E)$ are the bases of the cocycle matroid of $(V^*, E)$.
\end{proof}

Note that the difference in rank of the upper and lower matroid of $(\mathfrak{F},E)$ is given by $(|E| - |V^*| + 1)- (|V|-1)= 2- \chi$, where
$\chi$ is the Euler characteristic. Notice also, that if $\Gamma$ is bipartite, all feasible sets of $D_{\Gamma} =(\mathfrak{F},E)$ must have
the same parity, since exchanging a red and green pair of edges always disconnects a Hamilton cycle of a bipartite $\Gamma$.

Examples of combinatorial maps together with the underlying graph and geometric dual are provided in 
Figures~\ref{deltamapexample01},~\ref{deltamapexample02},~\ref{deltamapexample03}.

\begin{figure}[htb]
\centering
\includegraphics{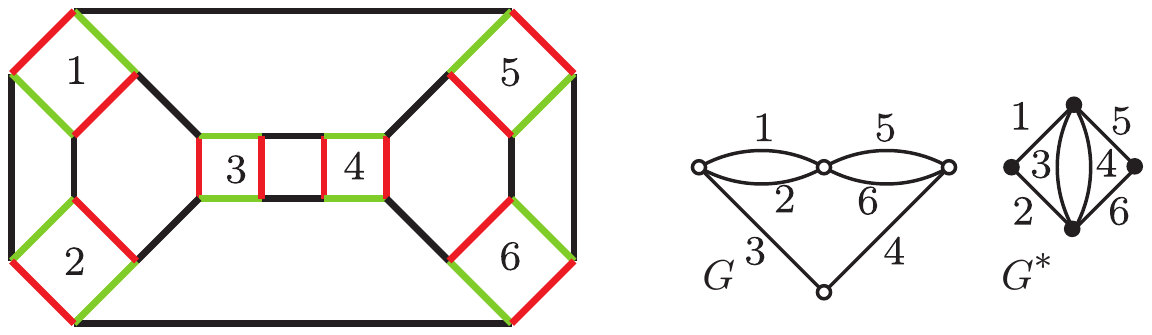}
\caption{\label{deltamapexample01}}
\end{figure}

The $\Delta$-matroid associated to the map of Figure~\ref{deltamapexample01} has feasible sets
$$\mathfrak{F} = \{ \{1,3,4\}, \{1,3,5\}, \{1,3,6\}, \{1,4,5\}, \{1,4,6\}, 
\{2,3,4\},$$
$$ \{2,3,5\}, \{2,3,6\}, \{2,4,5\},
\{2,4,6\}, \{3,4,5\}, \{3,4,6\} \}$$. Note that $\mathfrak{F}$ is the set of spanning trees of $G$ and at the same
time the set of co-trees of $G^*$, so all feasible sets have the same size and upper and lower matroid are identical.

\begin{figure}[htb]
\centering
\includegraphics{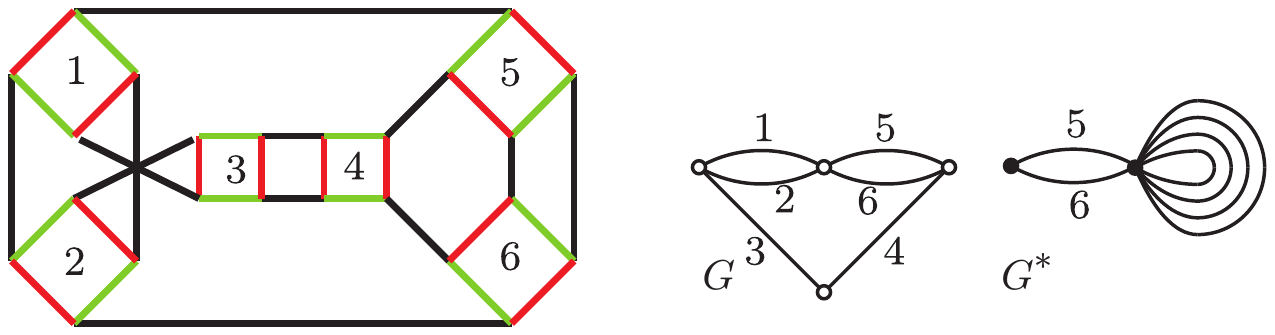}
\caption{\label{deltamapexample02}}
\end{figure}

The $\Delta$-matroid associated to the map of Figure~\ref{deltamapexample02} has feasible sets all
the sets in $\mathfrak{F}$ together with the two additional sets $\{1,2,3,4,5\}$ and $\{1,2,3,4,6\}$. 
The lower matroid is again the cycle matroid of $G$, but the upper matroid is the co-cycle matroid 
of $G^*$, which has rank~5 and contains exactly one cycle, namely $\{5,6\}$, which is a minimal cutset
of $G^*$ and also a cycle in $G$.

\begin{figure}[htb]
\centering
\includegraphics{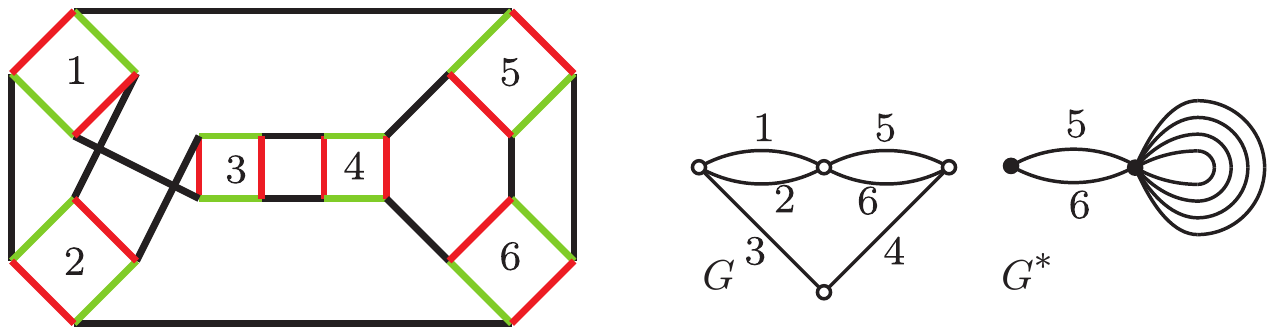}
\caption{\label{deltamapexample03}}
\end{figure}

The $\Delta$-matroid associated to the map of Figure~\ref{deltamapexample03}
has, in addition to the feasible sets of the previous example the feasible set $\{1,2,3,4\}$, whose parity is even,
while the parity of all other feasible sets is odd, so this map is not orientable.

As is clear from these examples, the map cannot, in general be recovered from the  $\Delta$-matroid information,
since the upper or lower matroid do not even determine the graph. Non-isomorphic graphs may have identical cycle-and
co-cycle matroids. It is easy to check that $\mathfrak{F}$ is also a list of spanning trees for the graph $G'$, but
$G$ is not isomorphic to $G'$.

However, if both $G$ and $G^*$ are 3-connected, then the map is uniquely recoverable from the $\Delta$-matroid information.

\begin{theorem}\label{DtoMThm}
Let $D$ be the $\Delta$-matroid of a map $M$ with 2-connected upper- and lower matroid.
Then $M$ is determined by $D$. 
\end{theorem}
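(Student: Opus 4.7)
The plan is to reconstruct the combinatorial map $\Gamma(R,G,B)$ associated with $M$ directly from the $\Delta$-matroid $D=(\mathcal{F},E)$; since a combinatorial map determines the topological map, this suffices. I would proceed in three stages.

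First, I would extract the underlying graph and its dual. The minimum- and maximum-cardinality members of $\mathcal{F}$ are the bases of the lower and upper matroids. By the preceding theorem these are the cycle matroid of $G=(V,E)$ and the cocycle matroid of $G^*=(V^*,E)$, respectively. Because both matroids are 2-connected, Whitney's 2-isomorphism theorem recovers $G$ and $G^*$ as abstract graphs on the ground set $E$, at worst up to a Whitney twist. In particular, for every $e \in E$ we recover the pair of vertices in $V$ and the pair of vertices in $V^*$ incident to the $RG$-quadrilateral $q_e$.

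Second, and this is the substantive step, I would read the cyclic order of the $RG$-quadrilaterals along each $R\cup B$-cycle (vertex of $G$) and each $G\cup B$-cycle (face of $G$) from the feasible sets of intermediate cardinality, together with edge twists in the non-orientable case. The tool is the Hamilton-cycle criterion from the proof of the previous theorem: given a base $T$ of $M_\ell$ and two non-tree edges $e,f$ incident to a common vertex $v$, the feasibility of $T\cup\{e\}$, $T\cup\{f\}$, $T\cup\{e,f\}$ and related small symmetric differences is governed entirely by whether the corresponding local surgeries at $v$ leave the associated fully black cycle connected, which in turn is controlled by the cyclic arrangement of $e$, $f$, and their neighbors along the $R\cup B$-cycle at $v$. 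Sweeping through all vertices should pin down the rotation system, and simultaneously eliminate the Whitney-twist ambiguity remaining from the first stage. Assembling the rotation system with $G$ and the known edge/face incidences encoded by $G^*$ then reproduces $\Gamma$.

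The main obstacle lies in the second stage: one must show that the pattern of intermediate-cardinality feasible sets truly distinguishes every pair of maps that happen to share $G$ and $G^*$ but differ in rotation or orientation. It is here that the 2-connectedness hypothesis is indispensable, since any 1-separation of either $M_\ell$ or $M_u$ would permit an edge-preserving twist of $M$ that preserves every feasible set. Making the local combinatorics around each vertex rigorous, and treating orientable and non-orientable maps uniformly, is the technical heart of the argument.
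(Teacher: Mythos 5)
Your overall outline (recover $G$ and $G^*$ from the lower and upper matroids via Whitney's theorem, then recover a rotation system) matches the paper's, and your first stage is essentially the paper's first sentence. The genuine gap is in your second stage: you never actually produce the rotation system. You assert that the cyclic order of edges around each vertex ``should'' be readable from the feasibility pattern of intermediate-cardinality sets via connectivity of local surgeries on a fully black Hamilton cycle, and then you explicitly defer the verification, calling it the technical heart. As written, that step is a conjecture, not an argument; it is also far more machinery than is needed, and it is not clear it can be pushed through uniformly in the non-orientable case as you set it up.

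The idea you are missing is that once $G=(V,E)$ and $G^*=(V^*,E)$ are both known \emph{on the same ground set} $E$, no further information from $\mathcal{F}$ is required: each edge $e$ carries both its pair of endpoints in $V$ and its pair of endpoints in $V^*$, and two edges $e,f$ incident to a vertex $v$ of $G$ are consecutive in the rotation about $v$ exactly when they are incident to a common vertex of $G^*$, because the vertex cocycles of $G^*$ are the facial cycles of the embedded $G$. That single observation determines the rotation at every vertex and hence the map graph $\Gamma(R,G,B)$, which is the entirety of the paper's second step. (Your caution about Whitney twists is not misplaced --- the paper's surrounding discussion in fact asks for $G$ and $G^*$ to be $3$-connected so that Whitney's theorem gives genuine uniqueness rather than uniqueness up to $2$-isomorphism --- but resolving that ambiguity by appeal to intermediate feasible sets is again not what is needed; the hypothesis is meant to remove the ambiguity at the outset.)
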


\begin{proof}
By Whitney's theorem~\cite{Whitney}, upper and lower matroid uniquely determine $G$ and $G^*$. 
To recover $M$ from $D$, we need to specify a rotation system for each vertex $v$ of $G$. To determine
if two edges  $e$ and $f$ with endpoint $v$ follow each other in the rotation about $v$, it is enough 
to check if $e$ and $f$ are both incident in $G^*$, since the vertex co-cycles of $G^*$ correspond to the
facial cycles of the embedded $G$. Now re-construct the map graph.
\end{proof}

\begin{figure}[htb]
\centering
\includegraphics{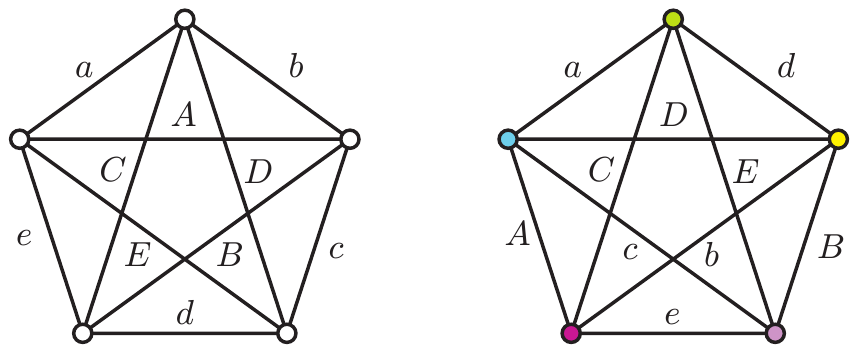}
\caption{\label{k5torusmapagainfig01}}
\end{figure}

\begin{figure}[htb]
\centering
\includegraphics{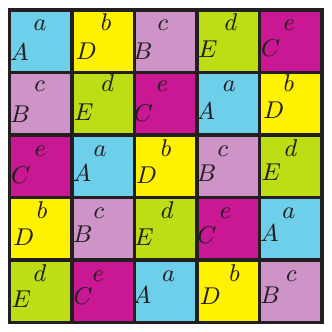}
\caption{\label{k5torusmapagainfig02}}
\end{figure}

For example the lower matroid could be the cycle matroid of $K_5$, while the upper matroid
is the co-cycle matroid of $K_5$ as well, so this matroid information gives us the graphs
$G$ and $G^*$ depicted in Figures~\ref{k5torusmapagainfig01}. By the method in the proof of Theorem~\ref{DtoMThm}
the map $M$ is easily recovered to be as in Figure~\ref{k5torusmapagainfig02}.


\section{Another $\Delta$-matroid from a map}

If the objective is to define a natural $\Delta$-matroid from a combinatorial map,
the requirement that the subgraph of the map graph be Hamiltonian
can be weakened provided that some connectivity is required.
Again, let $\Gamma$ be a map graph with edge set $R \cup G \cup B$, with red edges $R$, green edges $G$ and black edges $B$.

\begin{theorem}\label{theo:twoval}
Let $K$ be a fully black $2$-valent subgraph of $\Gamma$ with the property that $K \cup R$ and $K \cup G$ are both connected.
Then the set $F_K$ of quadrilaterals in which red is
selected in $K$ form the feasible sets of a $\Delta$-matroid.
\end{theorem}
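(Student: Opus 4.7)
The plan is to follow the template of the Hamiltonian proof in the preceding theorem, but with extra bookkeeping since validity now means connectivity of both $K \cup R$ and $K \cup G$ rather than a single Hamiltonicity condition, so the exchange step must track two cuts simultaneously.

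Given valid subgraphs $K, K'$ and a quadrilateral $q \in F_K \Deltab F_{K'}$, after possibly exchanging $K$ and $K'$, I may assume $q$ is red in $K$ and green in $K'$; write $e_1,e_2$ for its red edges and $g_1,g_2$ for its green edges. First I try the single swap $K_1 = K$ with $q$ recolored to green. Since $K_1 \cup R \supseteq K \cup R$ it is automatically connected, so $K_1$ is valid iff $K_1 \cup G = (K \cup G) \setminus \{e_1,e_2\}$ is connected, in which case $q' = q$ witnesses the exchange.

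Otherwise $\{e_1,e_2\}$ is a cut of $K \cup G$ separating vertex classes $A$ and $B$. Mirroring the geometric step in the Hamiltonian proof, $g_1,g_2$ lie in $K \cup G$ and cannot lie in a purely red cut, so the four vertices of $q$ split as $v_1,v_4 \in A$, $v_2,v_3 \in B$, and in particular $K_1 \cup G$ has exactly two components. Connectivity of $K' \cup G$, which agrees with $K \cup G$ on all black and green edges but omits $\{e_1,e_2\}$, then forces a red edge of some $q_1 \in F_{K'}$ to cross $(A,B)$. This $q_1$ must lie in $F_{K'} \setminus F_K$---otherwise its red edges would also lie in $K \cup G$ across the cut, contradicting that $\{e_1,e_2\}$ is the entire $(A,B)$-cut of $K \cup G$---so $q_1 \in F_K \Deltab F_{K'}$, and by the same analysis both red edges $e_1',e_2'$ of $q_1$ cross $(A,B)$. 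Taking $K_2$ to be $K$ with both $q$ and $q_1$ swapped, $F_{K_2} = F_K \Deltab \{q,q_1\}$, and $K_2 \cup G = K_1 \cup G \cup \{e_1',e_2'\}$ is connected because $\{e_1',e_2'\}$ bridges $(A,B)$.

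I expect the main obstacle to be verifying that $K_2 \cup R = K_1 \cup R \setminus \{g_1',g_2'\}$ is also connected, where $g_1',g_2'$ are the green edges of $q_1$. If a particular $q_1$ happens to make $\{g_1',g_2'\}$ a 2-cut of $K_1 \cup R$, the plan is to apply the dual vertex-placement analysis (with the roles of $R$ and $G$ swapped) and use the connectivity of $K' \cup R$ as a second witness, showing that among the candidate quadrilaterals in $F_{K'} \setminus F_K$ whose red edges cross $(A,B)$, at least one can be selected that also preserves $K_2 \cup R$-connectivity. Coordinating the two cut conditions through a single choice of $q_1$---rather than handling a single cycle as in the Hamiltonian case---is the central combinatorial subtlety that distinguishes this theorem from its Hamiltonian predecessor.
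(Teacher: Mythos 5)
Your plan follows the paper's argument step for step---try the single swap, and when $(K\cup G)\setminus\{e_1,e_2\}$ disconnects, use connectivity of $K'\cup G$ to find a quadrilateral $q_1\in F_{K'}\setminus F_K$ whose red pair crosses the cut---but it stops exactly where the proof still has to be finished. The deferred verification, that $K_2\cup R=(K\cup R)\cup\{g_1,g_2\}\setminus\{g_1',g_2'\}$ remains connected, is a genuine obligation and not a technicality: without it you have not shown that $F_K\Deltab\{q,q_1\}$ is feasible, and your stated plan for it (re-run the dual analysis and select among candidates) is a hope, not an argument. A smaller but real slip: the axiom ($\Delta$F) modifies $F_1$, so you may not ``exchange $K$ and $K'$''; the legitimate reduction for the case $q\in F_{K'}\setminus F_K$ is to exchange the colours red and green, which swaps $F_K$ with its complement and interchanges the two connectivity conditions.

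The missing step can be closed cleanly, and in fact \emph{every} candidate $q_1$ works, so no coordination between the two cuts is needed. Let $(A,B)$ be the vertex bipartition of $\Gamma$ given by the two components of $K_1\cup G$, and let $X$ be the set of quadrilaterals whose red pair crosses $(A,B)$; you already observed that $q\in X$, that both red edges of a member of $X$ cross, and that $X\cap F_K=\{q\}$. Walking around any red--black cycle of $\Gamma$, the side changes only at crossing red edges (black and green edges never cross), so each red--black cycle carries an even number of them; translated to the underlying graph $(V,E)$, whose vertices are the red--black cycles, $X$ is an even subgraph, hence an edge-disjoint union of cycles. Now connectivity of $K_*\cup R$ for a fully black $2$-valent $K_*$ is equivalent to the green-selected quadrilaterals of $K_*$ forming a connected spanning subgraph of $(V,E)$, since contracting each red--black cycle turns each selected green pair into a doubled edge of $(V,E)$. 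Because $X\subseteq(E\setminus F_K)\cup\{q\}$, every $q_1\in X$ lies on a cycle of $(E\setminus F_K)\cup\{q\}$ and so is not a bridge of it; hence $(E\setminus F_K)\cup\{q\}\setminus\{q_1\}$ is still connected and spanning, i.e.\ $K_2\cup R$ is connected. For what it is worth, the paper's own proof is no better at this point: it asserts connectivity of $(K-q_r+q'_r)+R$, which equals $K+R$ and is not the graph that needs checking. So you have correctly located the one step that requires an idea; you just have not supplied the idea.
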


\begin{proof}
We have to show the symmetric exchange property.
Let $F_K$ and $F_{K'}$ be sets of red quadrilaterals corresponding
to fully black $2$-valent subgraphs $K$ and $K'$, both of which can be
connected by adding edges of one color only.
Let $q \in F_K \Deltab F_K'$, so the edges of quadrilateral $q$ are differently colored in $K$ and $K'$, say red and green respectively.
If the red edges of $q$ belong to two different cycles of $K$, then swapping red for green in $q$ merges the two cycles, then
we may take $q' = q$ and the $F_K \Deltab \{q,q'\}$ will be connected by the same collections of red, respectively green edges as $F_k$.

So we may assume that the red edges of $q$ belong to the same component $K$.  If swapping red for green in $q$
does not split the component of $K$ they belong to, see the right side of Figure~\ref{deltydeltfig01a}, then just as before,
take $q' = q$.  So we may assume that the red edges of $q$ belong to the same component of $K$, and swapping them
for green splits that component, see the left side of Figure~\ref{deltydeltfig01a}.
Let the red edges of $q$ be denoted by $q_r$ and the green edges of $q$ be denoted by $q_g$.
Clearly $(K-q_r + q_g) + R$ is connected since $K  + R  \subseteq (K -q_r + q_g) + R $ and is connected.
The issue is that $(K-q_r + q_g) + G = K + G - q_r$ may have two components.  If it has just one, again, take $q' = q$ and we are done.
We know that $K' + G$ is connected, so $K'$ must have a red edge of some quadrilateral $q'$ that connects the two components of
$(K-q_r + q_g) + G$, so $q' \not \in F_K$ and $q' \in F_{K'}$, that is $q' \in F_K \Deltab F_{K'}$.
$(K - q_r + q_r') + G$ is connected and we already know
$(K - q_r + q_r') + R$ is connected, so the collections $F_K$ are the feasible sets of a $\Delta$-matroid.
\end{proof}

Let $D_{\Gamma}$ be the $\Delta$-matroid as in Theorem \ref{DtoMThm}, with feasible sets the pairs of red edges in a fully black Hamilton cycle, and $D_K$ be the $\Delta$-matroid as in Theorem \ref{theo:twoval}, with feasible sets the pairs of red edges in a fully black $2$-valent subgraph $K$ such that $K$ becomes
connected by addition of red edges only as well as by addition of only green edges. $D_\Gamma$ and $D_K$ are different. For example for the unitary map there are two quadrilaterals, $\{q, q'\}$ and the feasible sets in the first sense are $\{\emptyset, \{q, q'\}\}$, whereas in the second sense are all subsets.
%
For unitary maps the connectivity issue
here is void since the $R + B$ and $G + B$ are both connected.
The upper and lower matroid for both $\Delta$-matroids we defined are the same.
However, the Hamiltonian requirement
encodes the orientability of the map, by the fact that all feasible sets have the same parity in the orientable case and are
of both even and odd cardinality if $\Gamma$ is not bipartite, while the second approach does not distinguish between the two.


\vspace{0.5cm}


\begin{thebibliography}{99}


\bibitem{greedydelt}
A.~Bouchet, ``Greedy algorithm and symmetric matroids'', Math. Program. {\bf 38}, 147--159 (1987).

\bibitem{Bouchet}
A.~Bouchet, ``Maps and delta-matroids'', Discrete Math. {\bf 78}, 59--71 (1989).

\bibitem{godsil2001algebraic} 
C.~Godsil and G. Royle, ``Algebraic Graph Theory,'' Graduate Texts in Mathematics, Springer New York, (2001), https://books.google.com.au/books?id=pYfJe-ZVUyAC.

\bibitem{servatius} 
T.~Pisanski and B.~Servatius, ``Configurations from a graphical viewpoint,'' Birkh{\"a}user Advanced Texts: Basler Lehrb{\"u}cher. [Birkh{\"a}user Advanced Texts: Basel Textbooks], Birkh{\"a}user/Springer, New York, 2013, doi:10.1007/978-0-8176-8364-1.

\bibitem{Tutte}
W. T. Tutte, 
``What is a map?, in:'' New directions in the theory of graphs (Proc. Third Ann Arbor Conf., Univ. Michigan, Ann Arbor, Mich., 1971), Academic Press, New York, pp. 309--325, (1973).


\bibitem{Whitney} 
H. Whitney, ``Congruent Graphs and the Connectivity of Graphs,'' Amer. J. Math. {\bf 54} (1932), 150--168, doi:10.2307/2371086.

\end{thebibliography}

\end{document}